\newcommand{\nc}{\newcommand}
\nc{\dmo}{\DeclareMathOperator}
\nc{\nt}{\newtheorem}
\dmo{\Homeo}{Homeo}
\dmo{\Mod}{Mod}
\dmo{\SMod}{SMod}
\dmo{\PMod}{PMod}
\dmo{\I}{\mathcal{I}}
\dmo{\E}{Ends}
\nc{\Z}{\mathbb Z}
\nc{\R}{\mathbb R}
\nc{\N}{\mathbb N}
\nc{\C}{\mathcal C}
\nc{\G}{\mathcal G}
\nc{\f}{\mathfrak f}
\nc{\GE}{\text{Ends}_g}
\nc{\g}{\text{g}}
\nc{\cv}{\mathcal{V}}
\nc{\margin}[1]{\marginpar{\scriptsize #1}}
\title[Centers of subgroups of big mapping class groups
and the Tits alternative]{Centers of subgroups of big mapping class groups\\
and the Tits alternative}
\author{Justin Lanier}
\author{Marissa Loving}
\address{Justin Lanier \\ School of Mathematics\\ Georgia Institute of Technology \\ 686 Cherry St. \\ \mbox{Atlanta}, GA 30332 \\  jlanier8@math.gatech.edu}
\address{Marissa Loving \\ Department of Mathematics\\ University of Illinois\\ 1409 W. Green Street \\ \mbox{Urbana}, IL 61801 \\  mloving2@illinois.edu}
\thanks{The authors are supported by NSF Grants DGE - 1650044 and DGE - 1144245.}
\begin{document}

\vspace*{-1in}
\maketitle

\vspace*{-.4in}

\begin{abstract}
In this note we show that many subgroups of mapping class groups of infinite-type surfaces without boundary have trivial centers, including all normal subgroups. Using similar techniques, we show that every nontrivial normal subgroup of a big mapping class group contains a nonabelian free group. In contrast, we show that no big mapping class group satisfies the strong Tits alternative enjoyed by finite-type mapping class groups. We also give examples of big mapping class groups that fail to satisfy even the classical Tits alternative and give a proof that every countable group appears as a subgroup of some big mapping class group.
\end{abstract}


\vspace*{.4in}

Let $S$ be a connected orientable surface without boundary that is of infinite-type, so that $\pi_1(S)$ is infinitely generated. The mapping class group $\Mod(S)$ is the group of homotopy classes of orientation-preserving homeomorphisms of $S$. The mapping class group of an infinite-type surface is often called a big mapping class group. Similarly, let $S_g$ be the compact connected orientable surface of genus $g$ and $\Mod(S_g)$ be its mapping class group.

In this note we address several questions about subgroups of $\Mod(S)$. We first prove some results about the triviality of centers of subgroups of $\Mod(S)$. We then show that $\Mod(S)$ never satisfies the strong Tits alternative enjoyed by $\Mod(S_g)$, as well as some related results.

\subsection*{Centers}The center of $\Mod(S_g)$ is trivial for $g \geq 3$, while its center is a cyclic group generated by the hyperelliptic involution when $g= 1$ or $2$ \cite[Section 3.4]{Primer}. The centers of mapping class groups of finite-type surfaces with punctures and boundary components were computed by Paris--Rolfsen \cite[Theorem 5.6]{PRmcg}. Although Dehn twists about boundary components are always central, for finite-type surfaces without boundary there are only finitely many exceptional cases of low complexity where the center is nontrivial.

These results where the center is trivial follow from the existence of a (stable) Alexander system---a collection $\Gamma$ of simple closed curves and arcs in the surface such that no nontrivial mapping class fixes $\Gamma$ \cite[Section 2.3]{Primer}. Hern\'andez--Morales--Valdez recently proved that the Alexander method carries over to the infinite-type setting \cite{HMV}.

\begin{theorem}[Hern\'andez--Morales--Valdez] \label{theorem:alexander}
Let $S$ be an orientable surface of infinite topological type, with possibly non-empty boundary. There exists a collection of essential arcs and simple closed curves $\Gamma$ on $S$ such that any orientation-preserving homeomorphism fixing pointwise the boundary of $S$ that preserves the isotopy classes of the elements of $\Gamma$, is isotopic to the identity.
\end{theorem}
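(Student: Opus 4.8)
The plan is to reduce the infinite-type statement to the classical finite-type Alexander method by means of an exhaustion, and then to assemble the resulting isotopies into a single isotopy of $S$ by controlling supports. First I would fix an exhaustion $S = \bigcup_{n \geq 1} S_n$ by an increasing sequence $S_1 \subset S_2 \subset \cdots$ of connected finite-type subsurfaces with $\partial S \cap S_n \subseteq \partial S_n$, chosen so that no component of the complement $S \setminus \mathrm{int}(S_n)$ is a disk. On each ``annular'' piece $S_n \setminus \mathrm{int}(S_{n-1})$, which is itself a compact surface with boundary, the finite-type Alexander method supplies a finite collection of essential arcs and simple closed curves, pairwise in minimal position, whose union cuts the piece into disks together with collars of $\partial S$. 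Arranging all of these finite collections to be in minimal position with one another, I would set $\Gamma$ to be their union. The structural properties to verify are that $\Gamma$ is \emph{locally finite}---every point of $S$ has a neighborhood meeting only finitely many elements of $\Gamma$---and that $S$ cut along $\Gamma$ is a disjoint union of disks and half-open collar annuli of $\partial S$.

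Next, given an orientation-preserving homeomorphism $\phi$ fixing $\partial S$ pointwise and preserving the isotopy class of every element of $\Gamma$, I would produce an isotopy to the identity inductively. Applying the finite-type method on $S_1$, I can isotope $\phi$ so that it restricts to the identity on $\Gamma \cap S_1$ and hence, since those arcs and curves cut $S_1$ into disks, to a homeomorphism $f_1$ equal to the identity on $S_1$. Supposing I have isotoped $\phi$ to a homeomorphism $f_{n-1}$ equal to the identity on $S_{n-1}$, I would restrict $f_{n-1}$ to the finite-type subsurface $S_n \setminus \mathrm{int}(S_{n-1})$, on which it fixes $\partial S_{n-1}$ pointwise and preserves the relevant isotopy classes, and apply the Alexander method \emph{rel boundary} to isotope it to the identity there. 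Extending this isotopy by the identity on $S_{n-1}$ and on $S \setminus S_n$ produces $f_n$, equal to the identity on $S_n$, reached from $f_{n-1}$ by an isotopy supported in $S \setminus \mathrm{int}(S_{n-1})$. This yields a sequence $\phi = f_0, f_1, f_2, \dots$ with $f_n|_{S_n} = \mathrm{id}$ and isotopies $f_{n-1} \simeq f_n$ whose supports escape every compact set.

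The main obstacle, and the genuinely new point beyond the finite-type case, is to show that the infinite concatenation of these isotopies defines a continuous isotopy $H \colon S \times [0,1] \to S$ from $\phi$ to the identity. I would reparametrize the $n$-th stage to run on the interval $[\,1 - 2^{-(n-1)},\, 1 - 2^{-n}\,]$, so that all gluing occurs before time $1$. Continuity of $H$ at interior times is automatic, being a finite concatenation there; continuity at $t = 1$ is exactly where local finiteness does the work. Any point $p$ lies in $\mathrm{int}(S_n)$ for some $n$, and since every stage after the $n$-th is supported outside $\mathrm{int}(S_{n-1}) \supseteq \{p\}$ once $n-1$ is large enough, the track $t \mapsto H(p,t)$ is eventually constant with value $f_n(p) = p$; taking a neighborhood of $p$ inside $\mathrm{int}(S_n)$ shows $H$ is locally constant equal to the identity near $(p,1)$. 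Hence $H$ is continuous, $H(\,\cdot\,,0) = \phi$, and $H(\,\cdot\,,1) = \mathrm{id}$, so $\phi$ is isotopic to the identity. The delicate bookkeeping is ensuring simultaneously that each stage can be carried out rel the already-fixed region (this is where the careful, locally finite choice of $\Gamma$ is essential, so that isotopy classes in each subsurface are faithfully controlled) and that the supports genuinely march off to infinity, which is precisely what guarantees the limit isotopy is well defined.
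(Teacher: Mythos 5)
This statement is not proved in the paper at all---it is quoted from Hern\'andez--Morales--Valdez \cite{HMV}---so there is no internal proof to compare against. Measured against the strategy of the cited proof, your overall architecture is the right one: exhaust $S$ by finite-type subsurfaces, apply the finite-type Alexander method piece by piece, and concatenate isotopies whose supports escape to infinity. Your treatment of the limit isotopy (reparametrizing the $n$-th stage into $[1-2^{-(n-1)},\,1-2^{-n}]$ and using local finiteness to get continuity at $t=1$) is correct and is indeed the genuinely new analytic point in the infinite-type setting.

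The genuine gap is in the construction of $\Gamma$. If $\Gamma$ is the union of Alexander systems of the pieces $S_n\setminus \mathrm{int}(S_{n-1})$, there are two problems. First, an arc that cuts such a piece into disks must have endpoints on the artificial boundary circles $\partial S_{n-1}\cup\partial S_n$, and such an arc is not an essential arc \emph{on $S$} in the sense of the theorem (arcs on $S$ run between punctures or boundary components of $S$). Second, if you discard those and keep only honest curves and arcs of $S$, each of which then lies in a single piece and so is disjoint from every $\partial S_n$, the Dehn twist about $\partial S_n$ preserves the isotopy class of every element of $\Gamma$ while not being isotopic to the identity; so your $\Gamma$ is not an Alexander system. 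The repair is to choose the collection so that for every $n$ it contains elements crossing $\partial S_n$ essentially---for instance so that the elements contained in $S_n$ already form an Alexander system for $S_n$ for each $n$. Relatedly, your inductive step silently upgrades the hypothesis that $f_{n-1}$ preserves \emph{free} isotopy classes in $S$ to the claim that it preserves isotopy classes rel the already-fixed subsurface $S_{n-1}$ (and that it preserves $S_n$ setwise up to such an isotopy); this is exactly where twisting about $\partial S_{n-1}$ could hide, and it must be ruled out using the crossing elements just described together with a minimal-position argument. With those repairs the skeleton of your argument matches the cited proof and is sound.
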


With this result in hand, it is straightforward to compute the centers of big mapping class groups, just as in the finite-type case. Throughout this note, we restrict our attention to surfaces without boundary. When $S$ has boundary components, Dehn twists about boundary components are always central, and the center of $\Mod(S)$ can be analyzed by applying a capping homomorphism to each boundary component \cite[Proposition~3.19]{Primer}. 

\begin{proposition} \label{proposition:center}
If $S$ is an infinite-type surface without boundary, the center of $\Mod(S)$ is trivial.
\end{proposition}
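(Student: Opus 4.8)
The plan is to mirror the classical finite-type argument (\cite{Primer}, Section 3.4), substituting Theorem \ref{theorem:alexander} for the usual Alexander method. Let $f$ be a mapping class lying in the center of $\Mod(S)$, represented by an orientation-preserving homeomorphism. The first step is to show that $f$ fixes the isotopy class of every essential simple closed curve. For this I would use the conjugation formula $\phi T_c \phi^{-1} = T_{\phi(c)}$, valid for any mapping class $\phi$ and any simple closed curve $c$ because $\phi$ is orientation-preserving; this identity is local and so holds verbatim on infinite-type surfaces. Since $f$ is central, $T_{f(c)} = f T_c f^{-1} = T_c$, and then the injectivity of the assignment $c \mapsto T_c$---the fact that $T_a = T_b$ forces $a = b$ for isotopy classes of essential curves, again a local statement---yields $f(c) = c$ for every such $c$.

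It remains to promote ``$f$ fixes every simple closed curve'' to ``$f$ is isotopic to the identity,'' and this is where Theorem \ref{theorem:alexander} enters. The subtlety, and the step I expect to be the main obstacle, is that the Alexander collection $\Gamma$ also contains essential arcs, while the argument above only controls the simple closed curves in $\Gamma$; indeed, fixing all unoriented curves is genuinely insufficient in the finite-type sporadic cases of genus $1$ and $2$, where the hyperelliptic involution fixes every curve yet is nontrivial. Thus I must rule out an analogous involution-type mapping class, equivalently show that $f$ also preserves the isotopy class of each arc of $\Gamma$.

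I would handle this by exploiting that $S$ has infinite topological type. Exhaust $S$ by an increasing sequence of finite-type subsurfaces $S_1 \subset S_2 \subset \cdots$ whose boundary curves lie among the curves fixed by $f$, so that $f$ preserves each $S_i$ up to isotopy, and whose complexity grows without bound, so that each $S_i$ may be taken non-sporadic. On each such $S_i$ the restriction of $f$ fixes every interior curve and the boundary, and the finite-type computation of the center then forces $f|_{S_i}$ to be isotopic to the identity, up to Dehn twists about $\partial S_i$. Passing to the limit over the exhaustion gives $f \simeq \mathrm{id}$ on all of $S$; equivalently, one argues arc-by-arc, since each arc $\alpha \in \Gamma$ sits in a non-sporadic finite-type subsurface on which $f$ already restricts to the identity, whence $f(\alpha) \simeq \alpha$ and Theorem \ref{theorem:alexander} applies directly. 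The care needed is in choosing the exhaustion so that every $S_i$ avoids the low-complexity exceptional cases---possible precisely because infinite type guarantees unbounded genus or unboundedly many ends---and in verifying that the boundary-twisting ambiguities are pinned down using curves of $S_{i+1}$ crossing $\partial S_i$, hence do not survive in the limit.
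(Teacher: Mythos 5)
Your proposal is correct and follows essentially the same strategy as the paper's proof: the conjugation identity $fT_cf^{-1}=T_{f(c)}$ together with the injectivity of $c\mapsto T_c$ shows that a central element fixes the isotopy class of every simple closed curve, and Theorem~\ref{theorem:alexander} upgrades this to triviality. The paper runs the argument contrapositively and compresses your second step into the single assertion that a nontrivial mapping class must move some \emph{curve} (not merely some arc of the collection $\Gamma$); your exhaustion by non-sporadic finite-type subsurfaces with $f$-invariant boundary curves is a legitimate way to justify exactly that assertion, and it is the one place where your write-up supplies detail that the paper leaves implicit. The only point to watch in your version is the bookkeeping you already flag: the restriction $f|_{S_i}$ is well defined only up to twists about $\partial S_i$, and these ambiguities must indeed be killed using curves of $S_{i+1}$ crossing $\partial S_i$ before passing to the limit.
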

\begin{proof}
Suppose $f \in \Mod(S)$ is nontrivial; we show that $f$ is not central. As a consequence of Theorem \ref{theorem:alexander}, there exists a curve $c$ such that $f(c) \neq c$. Consider $fT_cf^{-1}$, which equals $T_{f(c)}$. If $f$ were central, the product $fT_cf^{-1}$ would also equal $T_c$. But $T_{f(c)} \neq T_c$, since curves are compact and $f(c) \neq c$.\end{proof}

If we now consider subgroups of $\Mod(S)$, a similar proof goes through for any subgroup that contains every Dehn twist, or even some nonzero power of every Dehn twist. Examples of such subgroups include: the pure mapping class group $\PMod(S)$, which is the subgroup that acts trivially on the space of ends of $S$; the compactly supported mapping class group $\Mod_c(S)$; and the level $m$ subgroup $\Mod(S)[m]$, which is the subgroup that acts trivially on $\text{H}_1(S; \Z/m\Z)$. Thus all of these subgroups of $\Mod(S)$ have trivial center.

Patel--Vlamis showed that the center of $\PMod(S)$ is trivial in the case where $S$ is an infinite-type surface with finite genus and without boundary \cite[Lemma 3.3]{PV}. Their proof uses the same standard trick of conjugating Dehn twists, but leverages the compact-open topology on $\Mod(S)$ as a stand-in for the Alexander method. 

Note that $\PMod(S)$, $\Mod_c(S)$, and $\Mod(S)[m]$ are all normal in $\Mod(S)$. In fact, we can extend the result of Proposition \ref{proposition:center} to the centers of all normal subgroups of $\Mod(S)$.

\begin{theorem}
If $S$ is an infinite-type surface without boundary, every normal subgroup of $\Mod(S)$ has trivial center.
\label{thm:normaltrivialcenter}
\end{theorem}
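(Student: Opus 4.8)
The plan is to reduce the statement to a cleaner one and then mimic the conjugation trick of Proposition \ref{proposition:center} \emph{internally} to the normal subgroup. Let $N \trianglelefteq \Mod(S)$ be nontrivial and suppose $f \in Z(N)$; I want to show $f = 1$. Since the center is a characteristic subgroup, $Z(N)$ is itself normal in $\Mod(S)$ and is abelian, so the theorem is equivalent to the assertion that $\Mod(S)$ has no nontrivial abelian normal subgroup --- equivalently, that the normal closure of every nontrivial element is nonabelian. First I would record the one identity that does all the work: for every curve $c$ the commutator $[f,T_c] = fT_cf^{-1}T_c^{-1} = T_{f(c)}T_c^{-1}$ lies in $N$ (it is the product of $f \in N$ with the conjugate $T_cf^{-1}T_c^{-1}$ of $f^{-1} \in N$), so $f$ commutes with it. Writing $c_n = f^n(c)$ and $w_c = T_{f(c)}T_c^{-1}$, conjugating $w_c$ by $f$ and using $fw_cf^{-1}=w_c$ gives, for every curve $c$, the relation $T_{c_1}T_{c_0}^{-1} = T_{c_2}T_{c_1}^{-1}$; that is, $w_{f(c)} = w_c$, so $w$ is constant along $f$-orbits.

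By Theorem \ref{theorem:alexander} some curve is moved, so fix $c$ with $f(c) \neq c$. The clean case is when $c$ can be chosen with $i(c,f(c)) = 0$. Then evaluating the relation $T_{c_1}T_{c_0}^{-1} = T_{c_2}T_{c_1}^{-1}$ on $c_0$ and on $c_1$, and using that a twist $T_{c_2}$ fixes a curve exactly when it is disjoint from $c_2$, forces $c_2$ to be disjoint from both $c_0$ and $c_1$. After discarding the degenerate possibilities $c_2 = c_1$ and $c_2 = c_0$ (each of which forces $c_0 = c_1$, contradicting $c_0 \neq c_1$), the three curves $c_0,c_1,c_2$ are pairwise disjoint and distinct. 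Now both sides of the relation are multitwists supported on these curves, and since Dehn twists about disjoint, pairwise non-isotopic curves generate a free abelian group, the twisting exponents are well defined; comparing the exponent on $c_1$ yields $+1 = -1$, a contradiction. Hence no such $f$ exists in this case.

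The main obstacle is the remaining case, in which \emph{every} curve moved by $f$ meets its image (for instance when $f$ restricts to a pseudo-Anosov on a finite-type subsurface and is the identity elsewhere), so the disjointness above is unavailable. Here I would argue through supports. For $c$ with $f(c)\neq c$, the element $v = w_c = T_{f(c)}T_c^{-1}$ has a well-defined \emph{compact} canonical support, namely the subsurface $\Sigma$ filled by $c \cup f(c)$; indeed, by Thurston's construction, when $c$ and $f(c)$ fill $\Sigma$ the corresponding element of $\mathrm{PSL}_2(\mathbb R)$ is hyperbolic, so $v$ is pseudo-Anosov on $\Sigma$. Because canonical support is natural under conjugation and $f$ commutes with $v$, the surface $\Sigma$ must be $f$-invariant, i.e.\ the subsurface filled by $c\cup f(c)$ equals that filled by $f(c)\cup f^2(c)$. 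The plan is to exploit this rigidity in one of two ways: either choose the moved curve $c$ so that $f$ genuinely pushes $\mathrm{Fill}(c,f(c))$ off itself, giving an immediate contradiction; or, when $f$ does preserve $\Sigma$, restrict to $\Mod(\Sigma)$ and use that the centralizer of the pseudo-Anosov $v$ is virtually cyclic while the family $\{\,[f,T_c] : c \subset \Sigma\,\}$ cannot be contained in it, again contradicting that $f$ commutes with every $w_c$.

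I expect the genuine difficulty to be organizing this last step \emph{uniformly} over all nontrivial $f$, since a general big mapping class need not be supported on, or even preserve, any single finite-type subsurface. The likely resolution is to first use the orbit identity $w_{f^n(c)} = w_c$ to extract a single finite-type, $f$-invariant subsurface on which $f$ exhibits the pseudo-Anosov or periodic behavior above, thereby reducing the infinite-type problem to a finite-type centralizer computation; pinning down that reduction is the crux, whereas the disjoint case and the core commutator identity are routine.
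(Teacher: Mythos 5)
Your setup coincides with the paper's: you form $[f,T_c]=T_{f(c)}T_c^{-1}\in N$, conjugate it by $f$, and reduce everything to the single relation $T_{f^2(c)}T_{f(c)}^{-1}=T_{f(c)}T_c^{-1}$ for a curve $c$ with $f(c)\neq c$ supplied by Theorem~\ref{theorem:alexander}. Your treatment of the case $i(c,f(c))=0$ is correct and in fact supplies details the paper leaves implicit (discarding $c_2=c_0$ is exactly what handles the possibility that $f$ swaps $c$ and $f(c)$).

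The gap is in the case $i(c,f(c))\geq 1$, and you flag it yourself: the reduction to a finite-type centralizer computation is never pinned down. The route through invariance of the canonical support only shows that $f$ preserves $\Sigma$, the subsurface filled by $c$ and $f(c)$, together with the foliations of $w_c$; that is not yet a contradiction --- nothing so far rules out, say, $f$ acting on $\Sigma$ as a root or power of a pseudo-Anosov sharing those foliations --- and since a big mapping class need not be supported on (or even preserve) any finite-type subsurface a priori, the phrase ``restrict to $\Mod(\Sigma)$'' itself needs justification. The missing idea is a one-line rearrangement of the relation you already have: $T_{f^2(c)}=T_{f(c)}T_c^{-1}T_{f(c)}$, whose right-hand side is conjugate (by $T_{f(c)}$) to $T_{f(c)}^{2}T_c^{-1}$ and hence is a partial pseudo-Anosov supported on $\Sigma$ by exactly the Thurston--Penner construction you invoke for $w_c$; the left-hand side is a Dehn twist, so the relation is impossible with no centralizer analysis needed. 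This is how the paper closes the case, and it would let you delete the entire last paragraph of your proposal.
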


\begin{proof}
Let $N$ be any normal subgroup of $\Mod(S)$ and let $f$ be any nontrivial element of $N$. As $f$ is nontrivial, by Theorem~\ref{theorem:alexander} there exists a curve $c$ such that $f(c) \neq c$. Since $N$ is normal, the product $f(T_c f^{-1} T_c^{-1})=T_{f(c)}T^{-1}_c$ is also in $N$. We will show that $f$ and $T_{f(c)}T^{-1}_c$ do not commute.
Conjugating $T_{f(c)}T^{-1}_c$ by $f$ yields $T_{f^2(c)}T^{-1}_{f(c)} 
\in N$. If $T_{f(c)}T^{-1}_c$ and $f$ were to commute, we would have $T_{f^2(c)}T^{-1}_{f(c)}=T_{f(c)}T^{-1}_c$. But this is not possible. 
First, consider the case when $i(c,f(c))=0$. Since $c$ and $f(c)$ are distinct, so are $f(c)$ and $f^2(c)$. But then the two products are clearly distinct multitwists, even if $f$ swaps $c$ and $f(c)$.
Then consider the case when $i(c,f(c)) \geq 1$. Rearranging the supposed equality yields $T_{f^2(c)}=T_{f(c)}T^{-1}_cT_{f(c)}$. The product on the right-hand side is a partial pseudo-Anosov on the surface filled by $c$ and $f(c)$, since its conjugate $T_{f(c)}^{2}T^{-1}_c$ is pseudo-Anosov by Penner's construction \cite{Penner}; see also \cite[p. 396]{Primer}. However, the left-hand side of the equation is a Dehn twist, a contradiction. We therefore have that $f$ does not commute with $T_{f(c)}T^{-1}_c$ and so conclude that the center of $N$ is trivial.
\end{proof}

We note that this argument also holds for finite-type surfaces with stable Alexander systems and recovers the corresponding fact about normal subgroups in that setting. This is a well-known fact for finite-type mapping class groups. A proof can be given by considering the action of the normal subgroup on the space of projective measured foliations; see, for instance, \cite[p. 52]{BM}. Theorem \ref{thm:normaltrivialcenter} can be also be proved by appealing to the finite-type result, in a similar way to how we proceed in our proof of Proposition~\ref{prop:freegroup}.

\subsection*{The Tits alternative}Of course, for any $S$ there do exist subgroups of $\Mod(S)$ that have nontrivial center. For instance, $\Mod(S)$ has abelian subgroups, such as any nontrivial cyclic subgroup or more generally any subgroup generated by mapping classes supported on disjoint subsurfaces. It is a theorem, proved independently by Ivanov and McCarthy, that $\Mod(S_g)$ satisfies a strong version of the Tits alternative: every subgroup is either virtually abelian or contains a nonabelian free group \cite{Ivanov, McCarthy}. In contrast, big mapping class groups do not satisfy this strong version of the Tits alternative.

\begin{theorem}
\label{theorem:ZwrZ}
For every surface $S$ of infinite type without boundary, $\Mod(S)$ contains the restricted wreath product $\Z \wr \Z$ as a subgroup and so does not satisfy the strong Tits alternative.
\end{theorem}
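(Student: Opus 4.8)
The plan is to exhibit two explicit mapping classes that generate a copy of $\Z \wr \Z$. Recall that $\Z \wr \Z = \left(\bigoplus_{i \in \Z} \Z\right) \rtimes \Z$, where the stable letter acts on the base group by shifting coordinates. This group is solvable, so it contains no nonabelian free subgroup; and it is finitely generated with exponential growth (being a wreath product $A \wr \Z$ with $A$ nontrivial), so it is not virtually abelian, since a finitely generated virtually abelian group has polynomial growth. Thus producing a copy of $\Z \wr \Z$ inside $\Mod(S)$ certifies the failure of the strong Tits alternative. The whole problem therefore reduces to finding an essential simple closed curve $c$ and a homeomorphism $h$ of $S$ of infinite order such that the curves $h^i(c)$, for $i \in \Z$, are pairwise disjoint and pairwise non-isotopic. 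I would then take $a = T_c$ and use $h$ as the stable letter.

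For the construction of $h$ I would use the classification of infinite-type surfaces: since $\pi_1(S)$ is infinitely generated, $S$ has either infinite genus or infinitely many ends (possibly both). In either case the strategy is to build a \emph{shift} supported on an embedded two-ended strip. Concretely, I would find a properly embedded subsurface $\Sigma \subset S$ homeomorphic to a bi-infinite strip carrying one topological feature per unit length---a handle in the infinite-genus case, or a pair of punctures in the infinitely-many-ends case---whose frontier is a disjoint union of two lines. The model strip admits a shift homeomorphism that translates the features by one step while fixing the two frontier lines pointwise; extending this map by the identity over $S \setminus \Sigma$ produces the desired $h$. Taking $c = c_0$ to be the essential curve enclosing the $0$-th feature, the curves $c_i := h^i(c)$ are, by construction, pairwise disjoint and pairwise non-isotopic, and $h(c_i) = c_{i+1}$.

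Granting such a pair $(c, h)$, I would finish by identifying $\langle T_c, h\rangle$ with $\Z \wr \Z$. Since the $c_i$ are disjoint and non-isotopic, the Dehn twists $T_{c_i}$ pairwise commute and generate a free abelian group $A := \langle T_{c_i} : i \in \Z\rangle \cong \bigoplus_{i \in \Z}\Z$ (only finitely many twists appear in any word, so the sum is direct). Because $h(c_i) = c_{i+1}$, conjugation gives $h T_{c_i} h^{-1} = T_{c_{i+1}}$, so $A$ is normalized by $h$ and the action of $\langle h\rangle$ on $A$ is exactly the coordinate shift; in particular $A$ is normal in $\langle T_c, h\rangle$ and the quotient $\langle T_c, h\rangle / A$ is generated by the image of $h$, hence cyclic. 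To see the extension is nondegenerate I would observe that every element of $A$ fixes the isotopy class of $c_0$, whereas $h^n(c_0) = c_n \neq c_0$ for $n \neq 0$; this shows simultaneously that $h$ has infinite order and that no nonzero power of $h$ lies in $A$. Hence $\langle T_c, h\rangle / A \cong \Z$, the subgroup $\langle h\rangle$ splits the extension, and $\langle T_c, h\rangle \cong A \rtimes_{\mathrm{shift}} \Z \cong \Z \wr \Z$.

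The main obstacle is the topological step of realizing $h$ as an honest homeomorphism of all of $S$. The delicate point is not the local model on $\Sigma$ but the global extension: I must embed the two-ended strip $\Sigma$ so that extending the shift by the identity yields a proper map. When $S$ has two distinct ends into which the two ends of $\Sigma$ can escape---two ends accumulated by genus, or two ends in the infinitely-many-ends case---this is routine. The genuinely delicate situation is when only one end is available, as for the infinite-genus one-ended surface (the Loch Ness monster); there both ends of $\Sigma$ must be routed out of the same end of $S$, and I would need to check carefully that the extension-by-identity of the shift remains continuous and proper at that end. This is the step I expect to require the most care, and where making the model surface and its embedding fully explicit is essential.
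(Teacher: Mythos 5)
Your overall strategy---produce a shift homeomorphism $h$ and a compactly supported mapping class whose $h$-translates are disjoint, then verify the semidirect product structure---is exactly the paper's, and your verification that $\langle T_c, h\rangle \cong \Z \wr \Z$ and that $\Z \wr \Z$ violates the strong Tits alternative (solvable, hence no nonabelian free subgroup; exponential growth, hence not virtually abelian) is correct and more detailed than what the paper records. The worry you flag at the end is also not where the difficulty lies: once the shift on the model strip fixes the two frontier lines pointwise (damp the translation to the identity near the frontier), the extension by the identity over $S \setminus \Sigma$ is a homeomorphism by the pasting lemma, and a homeomorphism of $S$ is automatically proper; the one-ended handle shift is exactly the construction of Patel--Vlamis that the paper cites.

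The genuine gap is in your case analysis. The dichotomy ``infinite genus or infinitely many ends'' is correct, but in the second branch your strip carries ``a pair of punctures per unit length,'' which presupposes that $S$ has infinitely many \emph{isolated} planar ends. A surface can have finite genus and infinitely many ends while having no isolated ends at all: the basic example is the plane (or sphere) minus a Cantor set, whose end space is perfect. There is then no puncture to enclose in a disk and no bi-infinite sequence of punctures to thread a strip through, so your construction does not produce $h$ on such surfaces. The paper handles this with a third case, the ``Cantor disk shift'': pick an end $e$ in the Cantor set $\mathcal{C}$ of ends, partition $\mathcal{C} \setminus e$ into countably many clopen Cantor pieces $C_i$ indexed by $\Z$, enclose each $C_i$ in a disk $D_i$, let $h$ carry $D_i$ to $D_{i+1}$, and let $g$ be any nontrivial mapping class supported in $D_0$. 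The trichotomy the paper extracts from Ker\'ekj\'art\'o--Richards is: infinite genus, or infinitely many isolated punctures, or a Cantor set of ends none of which is accumulated by isolated punctures; your argument covers the first two cases and needs this third construction to be complete.
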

\begin{proof} By the classification theorem of Ker\'{e}kj\'{a}rt\'{o} and Richards \cite{Ker, Richards}, every surface $S$ of infinite type without boundary contains at least one of the following: infinite genus, a countable collection of isolated punctures, or a Cantor set of punctures where none of these punctures is accumulated by isolated punctures. We consider these possibilities in turn.

Let $S$ be a surface with infinite genus. Then $S$ has at least one end accumulated by genus. Consider the subgroup of $\Mod(S)$ generated by a handle shift $h$ (possibly one-ended) and a mapping class $g$ supported on one of the handles in the support of $h$. (See \cite{PV} for a definition of handle shifts and \cite{REU} for their classification.) Then $\langle g,h \rangle$ is isomorphic to the restricted wreath product $\Z \wr \Z$, which is not virtually abelian and does not contain a nonabelian free group.

Similarly, let $S$ be a surface with a countable set of isolated punctures. We will construct a (one-ended) puncture shift, in analogy with a handle shift. Take a subset $P$ of the countable set of isolated punctures that accumulate to an end $e$. We may index $P$ by $\Z$ as $\{\dots p_{-1},p_0,p_1,\dots\}$. Take a bi-infinite strip $\Sigma$ containing $P$ that has two boundary components that are arcs connecting $e$ to itself, so that $\Sigma$ is homeomorphic to $\R \times [-1,1]$ with punctures at $\Z \times \{0\}$. This is depicted in Figure \ref{fig:punctureshift}. Let $H$ be a homeomorphism that fixes the boundary arcs as well as $S \setminus \Sigma$ and that pushes each $p_i$ to $p_{i+1}$. Let $h$ be the mapping class of $H$ and let $g$ be a nontrivial mapping class supported on the disk containing only $p_0$ and $p_1$. Then the subgroup $\langle g, h^2 \rangle$ is isomorphic to the restricted wreath product $\Z \wr \Z$.

Finally, let $S$ be a surface with a Cantor set of ends and no infinite set of isolated ends. Then none of the ends in the Cantor set $\mathcal{C}$ is accumulated by isolated ends. We will construct a (one-ended) Cantor disk shift. Pick one end $e \in \mathcal{C}$. Partition $\mathcal{C} \setminus e$ into a countable number of Cantor sets and index these by $\Z$: $\{\dots, C_{-1},C_0,C_1,\dots\}$. Let $D_i$ be a disk that supports $C_i$ and no other ends. Let $h$ be a mapping class that takes each $D_i$ to $D_{i+1}$. Let $g$ be any nontrivial mapping class supported in $D_0$. Then the subgroup $\langle g, h \rangle$ is isomorphic to the restricted wreath product $\Z \wr \Z$.
\end{proof}

\begin{figure}
\centering
\includegraphics[scale=1.2]{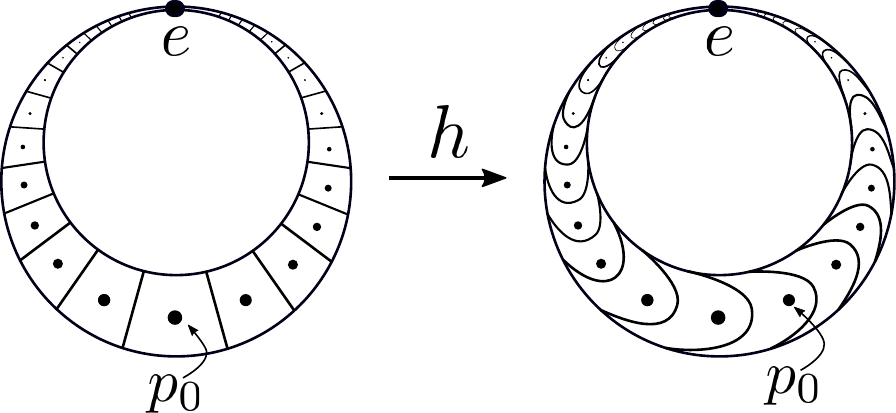}
\caption{A puncture shift.}
\label{fig:punctureshift}
\end{figure}

The classical Tits alternative replaces virtually abelian with virtually solvable. Since the $\Z \wr \Z$ subgroup of $\Mod(S)$ from Proposition \ref{theorem:ZwrZ} is solvable, it is not a counterexample to $\Mod(S)$ satisfying the classical Tits alternative. We now give an an example of a big surface whose mapping class group does not satisfy the classical Tits alternative.

\begin{example}
\label{ex:F}
\textnormal{Thompson's group $F$ does not satisfy the classical Tits alternative, as it is not virtually solvable and does not have a nonabelian free subgroup \cite[Corollary~3.3]{BrinSq}. Let $S$ be \mbox{$\R^2 \setminus \mathcal{C}$}, where $\mathcal{C}$ is a Cantor set embedded in $[0,1] \times \{0\}$. Let $F$ act by piecewise-linear homeomorphisms on $[0,1] \times \R$ so that it also acts faithfully by homeomorphsisms on $\mathcal{C}$. All of these homeomorphisms are orientation preserving and so $F$ is a subgroup of $\Homeo^+(S)$. Since $F$ acts faithfully on $\mathcal C$, the quotient map $\text{Homeo}^+(S) \to \text{Mod}(S)$ is injective on this $F$ subgroup. It follows that $F$ is a subgroup of $\Mod(S)$, which therefore does not satisfy the classical Tits alternative.}
\end{example}

The construction in Example~\ref{ex:F} can be extended to other examples of big surfaces that have a Cantor set in their space of ends. For instance, the construction holds for any surface $S$ where $\E(S)$ contains a Cantor set such that every point in the Cantor set is in the same mapping class group orbit. Still, there are many surfaces where this construction does not hold. 

\begin{question}
\label{q:classical}Do there exist infinite-type surfaces whose mapping class groups satisfy the classical Tits alternative?\end{question}

Another way of framing the Tits alternative is that there are groups that are not subgroups of any finite-type mapping class group. Thompson's group $F$ is of course an example of such a group. On the other hand, considering finite subgroups in the finite-type setting yields a very different phenomenon: every finite group is a subgroup of $\Mod(S_g)$ for some $g$. Two proofs of this fact are given by Farb and Margalit. The first builds a surface modeled on the Cayley graph of the group, and the second uses covering space theory \cite[Theorem~7.12]{Primer}.

When these proofs are extended to the infinite-type setting, they show that every countable group is a subgroup of some big mapping class group. Allcock gave the Cayley graph proof for both finite and infinite-type surfaces \cite{Allcock}. We give here the covering space argument.

\begin{theorem}
\label{thm:everycountable}For every countable group $G$, there is a surface $S$ of infinite type without boundary such that $G < \Mod(S)$. \end{theorem}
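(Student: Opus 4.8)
The plan is to realize $G$ as the deck group of a regular covering space, following the covering-space proof of \cite[Theorem~7.12]{Primer} but adapting it so that it applies to an arbitrary countable group and, crucially, so that the injectivity step does not rely on the finiteness of $G$. In the finite-group case one can argue that a nontrivial finite-order homeomorphism is never isotopic to the identity; here the deck transformations have infinite order, so I will need a fundamental-group argument instead.

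First I would build the base surface. Since $G$ is countable, it is a quotient of a free group of countable rank, so I would fix a connected orientable infinite-type surface $\Sigma$ without boundary whose fundamental group is free of countable rank (for instance an infinite-genus surface), together with a surjection $\rho\colon \pi_1(\Sigma)\twoheadrightarrow G$. I would choose $\rho$ to send infinitely many of the standard free generators to the identity, which guarantees both that $\ker\rho$ is nontrivial and that there is a nonseparating simple closed curve $\alpha\subset\Sigma$ with $\rho([\alpha])=1$. Let $p\colon S\to\Sigma$ be the connected regular cover corresponding to $\ker\rho$. Then $S$ is a connected orientable surface without boundary, and it is of infinite type because $\pi_1(S)\cong\ker\rho$ is infinitely generated. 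Its deck group is $G$, acting freely and properly discontinuously by orientation-preserving homeomorphisms, which yields $G\to\Homeo^+(S)$ and hence a homomorphism $\Phi\colon G\to\Mod(S)$.

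The substance of the proof is showing that $\Phi$ is injective, i.e. that no nontrivial deck transformation is isotopic to the identity. Fix $g\in G$ with $g\neq 1$, let $\phi_g$ be the corresponding deck transformation, and let $\tilde\alpha\subset S$ be a lift of $\alpha$ (a simple closed curve, since $\rho([\alpha])=1$). Because a homeomorphism isotopic to the identity fixes the isotopy class of every simple closed curve, it suffices to exhibit one curve whose isotopy class $\phi_g$ fails to preserve; I claim that $\phi_g(\tilde\alpha)$ is not isotopic to $\tilde\alpha$. Since essential simple closed curves are isotopic if and only if they are freely homotopic, this reduces to a statement about conjugacy classes in $\pi_1(S)\cong\ker\rho$: writing $a=[\alpha]\in\ker\rho$ and choosing $\gamma_g\in\pi_1(\Sigma)$ with $\rho(\gamma_g)=g$, the transformation $\phi_g$ acts on $\pi_1(S)$ by conjugation by $\gamma_g$, so I must rule out that $a$ and $\gamma_g a\gamma_g^{-1}$ are conjugate within $\ker\rho$.

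The crux, and the step I expect to be the main obstacle, is this conjugacy computation, which is where the choice of $\alpha$ pays off. Because $\alpha$ is nonseparating, $a$ is a primitive element of the free group $\pi_1(\Sigma)$, so its centralizer there is exactly $\langle a\rangle$. If $a$ and $\gamma_g a\gamma_g^{-1}$ were conjugate by some $w\in\ker\rho$, then $\gamma_g^{-1}w$ would centralize $a$ and hence lie in $\langle a\rangle\subseteq\ker\rho$; together with $w\in\ker\rho$ this forces $\gamma_g\in\ker\rho$, i.e. $g=\rho(\gamma_g)=1$, a contradiction. Thus $\phi_g$ is not isotopic to the identity, $\Phi$ is injective, and $G<\Mod(S)$. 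In writing this up I would take care to record the two standard inputs invoked — that a nonseparating simple closed curve represents a primitive element of the free surface group, and that deck transformations act on the fundamental group of the cover by the stated conjugation — along with the routine verifications that the regular cover is connected and boundaryless and that $S$ is genuinely of infinite type.
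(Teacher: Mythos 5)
Your proof is correct, and its skeleton is the same as the paper's: write $G=\pi_1(\Sigma)/\ker\rho$ for an infinite-type base $\Sigma$ with free fundamental group, pass to the regular cover $S$ corresponding to $\ker\rho$, and embed $G$ in $\Mod(S)$ via deck transformations. Where you genuinely differ is in the injectivity step, which is the only step with real content. The paper takes $\Sigma$ to be the flute surface and settles injectivity in one sentence, asserting that distinct deck transformations act differently on (relative) homology classes --- an argument that implicitly leans on the punctures of the flute surface and their preimages in $S$. You instead work over an infinite-genus base and show that a nontrivial deck transformation moves the free homotopy class of a lift of a nonseparating curve $\alpha$ with $[\alpha]\in\ker\rho$, reducing this to the computation of the centralizer of a primitive element in a free group. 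Your version buys two things: it is independent of the end structure of the cover, so it works for any base with free $\pi_1$ and avoids the subtlety that a puncture of $\Sigma$ whose meridian maps to an infinite-order element of $G$ unwinds to a simply connected end of $S$; and you explicitly arrange $\ker\rho\neq 1$ by killing infinitely many basis elements, which handles the degenerate case of a free $G$, where the normal closure of the relators in the paper's construction could be trivial and the ``cover'' simply connected. When writing it up, also record why $\ker\rho$ is infinitely generated (a nontrivial normal subgroup of an infinite-rank free group cannot be finitely generated, since it would then lie in a finite-rank free factor while containing all conjugates of a fixed nontrivial element), so that $S$ is honestly of infinite type; note too that you only need the easy direction that isotopic curves are freely homotopic.
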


\begin{proof}
Let $G$ be a countable group with presentation $\langle \ g_i \ | \ r_i \ \rangle$, $i \in \N$. Let $\Sigma$ be the flute surface $\R^2 \setminus \{(0,0),(1,0),(2,0),\dots\}$. Then $\pi_1(\Sigma) \cong F_\infty$, the (countably) infinite-rank free group. Identify the generators of $G$ with a generating set for $\pi_1(\Sigma)$. Consider the normal subgroup $N$ of $\pi_1(\Sigma)$ equal to the normal closure of all of the relators $r_i$ of $G$. Consider further the cover $S$ of $\Sigma$ corresponding to the normal subgroup $N$. $S$ is a connected infinite-type surface. Note that every sufficiently small neighborhood in $S$ is homeomorphic to some neighborhood in $\Sigma$, and so is either a disk, or a disk with finitely many punctures, or a disk with infinitely many punctures accumulating to a single point.  Further, $G=\langle g_i \ | \ r_i \rangle$ acts by homeomorphisms on $S$ as deck transformations. None of these homeomorphisms are isotopic, since they act differently on (relative) homology classes. Therefore $G<\Mod(S)$.
\end{proof}

Theorem \ref{thm:everycountable}, like Example \ref{ex:F}, proves the existence of big mapping class groups that do not satisfy the classical Tits alternative. Note, however, that Theorem \ref{thm:everycountable} does not by itself resolve Question \ref{q:classical}.

Even if $\Mod(S)$ does not satisfy the classical Tits alternative, it is possible that it satisfies some variant. One result in this direction was proved by Hurtado--Militon, who showed that a version of the Tits alternative holds for the smooth mapping class group of $S$ whenever $S$ has finite genus and a space of ends homeomorphic to a Cantor set \cite[Theorem 1.3]{HM}.

We conclude with a result showing that whenever $\Mod(S)$ does not satisfy the classical Tits alternative, the subgroup given as a counterexample cannot be normal in $\Mod(S)$. Observe that in the case of a surface $S$ with boundary, a normal subgroup consisting of boundary twists is abelian and therefore satisfies the other half of the alternative.

\begin{proposition}

\label{prop:freegroup}
If $S$ is an infinite-type surface without boundary, every nontrivial normal subgroup of $\Mod(S)$ contains a nonabelian free group.
\end{proposition}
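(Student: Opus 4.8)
The plan is to reduce to the finite-type setting and then invoke the strong Tits alternative of Ivanov and McCarthy. Given a nontrivial normal subgroup $N$, I would first manufacture a nontrivial element of $N$ that is supported on a compact subsurface, and then locate a nonabelian free group inside the associated finite-type mapping class group.

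First I would pick a nontrivial $f \in N$. By Theorem \ref{theorem:alexander} there is a curve $c$ with $f(c) \neq c$, and exactly as in the proof of Theorem \ref{thm:normaltrivialcenter} the commutator $g = f T_c f^{-1} T_c^{-1} = T_{f(c)} T_c^{-1}$ lies in $N$ by normality and is nontrivial because $f(c) \neq c$. The crucial feature of $g$ is that it is supported on a regular neighborhood of $c \cup f(c)$, which is a compact, hence finite-type, subsurface of $S$.

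Next I would enlarge this neighborhood to an essential finite-type subsurface $R \subset S$ arranged so that the inclusion-induced homomorphism $\Mod(R) \to \Mod(S)$ is injective (for instance with boundary curves that are pairwise non-isotopic and non-peripheral in $S$) and so that $R$ has complexity large enough that $\Mod(R)$ is not virtually abelian and has trivial center. Both conditions can be met because $S$ has infinite type: there is always room to absorb additional genus, punctures, or boundary, pushing the complexity past the finitely many low-complexity exceptions. Writing $M$ for the image of $\Mod(R)$, the intersection $N \cap M$ is normal in $M \cong \Mod(R)$ and contains $g$, so it is a nontrivial normal subgroup of a finite-type mapping class group. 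I would then apply the finite-type fact that a nontrivial normal subgroup of a sufficiently complicated $\Mod(R)$ contains a nonabelian free group; this follows from the Ivanov--McCarthy alternative \cite{Ivanov, McCarthy} together with the observation that such a $\Mod(R)$ admits no nontrivial normal virtually abelian subgroup, since the associated canonical reduction system or invariant foliations would otherwise be preserved by all of $\Mod(R)$. As $N \cap M \leq N$, this produces a nonabelian free group inside $N$.

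I expect the main obstacle to lie in the reduction step rather than in the finite-type input: one must choose $R$ carefully enough that the inclusion $\Mod(R) \to \Mod(S)$ is injective and that $g$ survives as a nontrivial element, while simultaneously guaranteeing enough complexity. Checking that this can always be arranged amounts to examining the three topological types supplied by the Ker\'{e}kj\'{a}rt\'{o}--Richards classification---infinite genus, infinitely many isolated punctures, and a Cantor set of ends---much as in the proof of Theorem \ref{theorem:ZwrZ}.
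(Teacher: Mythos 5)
Your overall strategy is the same as the paper's: pass to the compactly supported commutator $g = fT_cf^{-1}T_c^{-1} = T_{f(c)}T_c^{-1} \in N$, enlarge its support to a finite-type subsurface, and quote the finite-type normal-subgroup version of the Tits alternative. The step that does not work as written is the finite-type input itself. A finite-type subsurface $R$ of the infinite-type surface $S$ necessarily has nonempty boundary, and with the convention that makes $\Mod(R) \to \Mod(S)$ injective (fixing $\partial R$ pointwise), the Dehn twists about the components of $\partial R$ generate a nontrivial free abelian \emph{central} subgroup of $\Mod(R)$. So you cannot arrange for $\Mod(R)$ to have trivial center, and your claim that a sufficiently complicated $\Mod(R)$ ``admits no nontrivial normal virtually abelian subgroup'' is false: the boundary-twist subgroup is exactly such a subgroup, and your proposed justification does not exclude it, since its canonical reduction system is $\partial R$, which \emph{is} preserved by all of $\Mod(R)$. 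Consequently, knowing only that $N \cap M$ is a nontrivial normal subgroup of $M \cong \Mod(R)$ does not yet produce a free group: a priori $N \cap M$ could consist entirely of boundary multitwists.

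The repair is precisely the extra condition the paper imposes: choose $R$ (the paper's $\Sigma$) so that no boundary component of the support of $g$ is homotopic to a boundary component of $R$; then $g$ is not a multitwist about $\partial R$, so the normal subgroup it generates in $\Mod(R)$ is not contained in the boundary-twist center, and the correct finite-type statement (a normal subgroup of $\Mod(R)$ containing an element that is not a boundary multitwist contains a nonabelian free group) applies. Your construction very likely yields such an $R$, since you enlarge a regular neighborhood of $c \cup f(c)$, but your written argument never records or uses this fact and instead leans on the false ``no normal virtually abelian subgroup'' claim; as it stands the proof has a gap at exactly the point the paper is careful about. The remaining bookkeeping---injectivity of $\Mod(R) \to \Mod(S)$ via pairwise non-isotopic essential boundary curves, and normality of $N \cap M$ in $M$---is fine, and differs only cosmetically from the paper's use of the stabilizer $\Mod_\Sigma(S)$ and restriction to $\Sigma$.
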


\begin{proof}
Let $N$ be any normal subgroup of $\Mod(S)$ and let $f$ be any nontrivial element of $N$. Then by Theorem~\ref{theorem:alexander} there is a curve $c$ such that $f(c) \neq c$. We have that $g=T_{f(c)}T^{-1}_c$ is also in $N$. The element $g$ has support a compact subsurface $\Sigma'$. Let $\Sigma$ be a connected subsurface of $S$ containing $\Sigma'$ such that no boundary component of $\Sigma'$ is homotopic to any boundary component of $\Sigma$. Then $g$ does not act as a Dehn twist about any of the boundary components of $\Sigma$.

Consider $\Mod_{\Sigma}(S)$, the subgroup of $\Mod(S)$ that sends $\Sigma$ to itself. Observe that the restriction $\Mod_{\Sigma}(S)|_{\Sigma}$ is isomorphic to $\Mod(\Sigma)$. By conjugating $g$ by some $h \in \Mod_{\Sigma}(S)$, we may then produce a nonabelian free group as a subgroup of $\langle g, hgh^{-1} \rangle$ by applying the Tits alternative to normal subgroups
in the finite-type setting.\end{proof}

\subsection*{Acknowledgements} The authors would like to thank Jim Belk, Kevin Kordek, and Jing Tao for helpful conversations and Santana Afton, Tyrone Ghaswala, Chris Leininger, Dan Margalit, and Nick Vlamis for their comments on a draft of this note.

\bibliographystyle{plain}
\bibliography{paper}

\begin{thebibliography}{10}

\bibitem{Allcock}
Daniel Allcock.
\newblock Hyperbolic surfaces with prescribed infinite symmetry groups.
\newblock {\em Proc. Amer. Math. Soc.}, 134(10):3057--3059, 2006.

\bibitem{BM}
Tara Brendle and Dan Margalit.
\newblock Normal subgroups of mapping class groups and the metaconjecture of
  {I}vanov.
\newblock arXiv:1710.08929v2, 2017.

\bibitem{BrinSq}
Matthew~G. Brin and Craig~C. Squier.
\newblock Groups of piecewise linear homeomorphisms of the real line.
\newblock {\em Invent. Math.}, 79(3):485--498, 1985.

\bibitem{Primer}
Benson Farb and Dan Margalit.
\newblock {\em A Primer on Mapping Class Groups}.
\newblock Princeton University Press, 2011.

\bibitem{HM}
Sebastian Hurtado and Emmanuel Militon.
\newblock Distortion and {T}its alternative in smooth mapping class groups.
\newblock {\em Trans. Amer. Math. Soc.}, electronically published on February
  27, 2019, DOI: https://doi.org/10.1090/tran/7476 (to appear in print).

\bibitem{Ivanov}
N.~V. Ivanov.
\newblock Algebraic properties of the {T}eichm\"{u}ller modular group.
\newblock {\em Dokl. Akad. Nauk SSSR}, 275(4):786--789, 1984.

\bibitem{Ker}
B\'{e}la Ker\'{e}kj\'{a}rt\'{o}.
\newblock {\em Vorlesungen über Topologie. I}.
\newblock Springer, Berlin, 1923.

\bibitem{REU}
Santana Afton{,} Sam Freedman{,}~Justin Lanier{,} and Liping Yin.
\newblock Generators, relations, and homomorphisms of big mapping class groups.
\newblock In preparation.

\bibitem{McCarthy}
John McCarthy.
\newblock A ``{T}its-alternative'' for subgroups of surface mapping class
  groups.
\newblock {\em Trans. Amer. Math. Soc.}, 291(2):583--612, 1985.

\bibitem{HMV}
Jes\'us Hern\'andez Hern\'andez{,}~Israel Morales{,} and Ferr\'an Valdez.
\newblock The {A}lexander method for infinite-type surfaces.
\newblock arXiv:1703.00407v1, 2017.

\bibitem{PRmcg}
Luis Paris and Dale Rolfsen.
\newblock Geometric subgroups of mapping class groups.
\newblock {\em J. Reine Angew. Math.}, 521:47--83, 2000.

\bibitem{PV}
Priyam Patel and Nicholas~G Vlamis.
\newblock Algebraic and topological properties of big mapping class groups.
\newblock {\em Algebr. Geom. Topol.}, 18(7):4109--4142, 2018.

\bibitem{Penner}
Robert~C. Penner.
\newblock A construction of pseudo-{A}nosov homeomorphisms.
\newblock {\em Trans. Amer. Math. Soc.}, 310(1):179--197, 1988.

\bibitem{Richards}
Ian Richards.
\newblock On the classification of noncompact surfaces.
\newblock {\em Trans. Amer. Math. Soc.}, 106:259--269, 1963.

\end{thebibliography}

\end{document}